\newtheorem{thm}{Theorem}[section]
\newtheorem{lemma}[thm]{Lemma}
\newtheorem{conjecture}{Conjecture}
\theoremstyle{definition}
\theoremstyle{remark}
\newtheorem*{acknowledgements}{Acknowledgements}
\numberwithin{equation}{section}
\newcommand{\Z}{\mathbb{Z}}
\newcommand{\M}{\widetilde{M}}
\newcommand{\del}{\partial}
\begin{document}

\title [topology of manifolds with positive isotropic curvature] {On the topology of manifolds with\\ positive isotropic curvature}

\author{Siddartha Gadgil}
\address{department of mathematics,
Indian Institute of Science, Bangalore 560012, India}
\email{gadgil@math.iisc.ernet.in}

\author{harish seshadri}
\address{department of mathematics,
Indian Institute of Science, Bangalore 560012, India}
\email{harish@math.iisc.ernet.in}

\begin{abstract}
We show that a closed orientable Riemannian $n$-manifold, $n \ge
5$, with positive isotropic curvature and free fundamental group
is homeomorphic to the connected sum of copies of $S^{n-1}\times
S^1$.
\end{abstract}

\subjclass{Mathematics Subject Classification (1991): Primary 53C21,
Secondary 53C20}

\maketitle
\section{Introduction}
 Let $(M,g)$ be a closed, orientable,
Riemannian manifold with positive isotropic curvature. By
~\cite{mm}, if $M$ is simply-connected then $M$ is homeomorphic
to a sphere of the same dimension. We shall generalise this to the
case when the fundamental group of $M$ is a free group.

\begin{thm}\label{posisot}
Let $M$ be a closed, orientable Riemannian $n$-manifold with
positive isotropic curvature. Suppose that $\pi_1(M)$ is a free
group on $k$ generators. Then, if $n\neq 4$ or $k=1$ (i.e.
$\pi_1(M)=\Z$), $M$ is homeomorphic to the connected sum of $k$
copies of $S^{n-1}\times S^1$.
\end{thm}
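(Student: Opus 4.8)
The plan is to kill the fundamental group by surgery on embedded circles, apply the Micallef--Moore theorem to the resulting simply connected manifold, and then recover $M$ from the dual link in the sphere.

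Since $\pi_1(M)$ is free on $k$ generators, I would represent a free basis by $k$ disjoint embedded circles $\gamma_1,\dots,\gamma_k\subset M$; as $M$ is orientable and $T\gamma_i$ is trivial, each normal bundle $\nu(\gamma_i)$ is orientable, hence trivial, so $\nu(\gamma_i)\cong S^1\times D^{n-1}$. Performing surgery on all the $\gamma_i$ --- excising $\bigcup_i S^1\times D^{n-1}$ and gluing back $\bigcup_i D^2\times S^{n-2}$ --- produces a closed manifold $M_0$. Since $n\ge 4$, the sphere $S^{n-2}$ is connected and simply connected, so by van Kampen this surgery kills $[\gamma_i]$ in $\pi_1$ and adds nothing; as removing the codimension $\ge 3$ submanifolds $\gamma_i$ does not change $\pi_1$, we get $\pi_1(M_0)=F_k/\langle\!\langle\gamma_1,\dots,\gamma_k\rangle\!\rangle=1$. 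Thus $M_0$ is closed and simply connected, and --- granting the geometric step below --- it carries a metric of positive isotropic curvature; since $n\ge 5$, the theorem of Micallef--Moore cited in the introduction shows $M_0$ is a homotopy sphere, hence, by the generalized Poincar\'e conjecture, homeomorphic to $S^n$.

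The crux is to show that surgery on a circle preserves positive isotropic curvature, i.e.\ that $M_0$ admits such a metric. This is the positive-isotropic-curvature analogue of the Gromov--Lawson--Schoen--Yau surgery construction for positive scalar curvature: one keeps the original metric away from the $\gamma_i$, installs a fixed ``standard'' positive-isotropic-curvature metric on the handles $D^2\times S^{n-2}$, and interpolates through a long neck $S^1\times S^{n-2}\times[0,1]$. I expect this interpolation to be the main difficulty and the heart of the argument (presumably the content of the Key Lemma): positive isotropic curvature is an algebraic condition on the full curvature operator (positivity of $R_{1313}+R_{1414}+R_{2323}+R_{2424}-2R_{1234}$ over isotropic two-planes) rather than a single scalar, so bending the metric in the neck without destroying the condition requires a far more delicate construction than in the scalar-curvature case.

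It remains to identify $M$. Undoing the surgeries exhibits $M$ as the result of surgery on the dual link $L=\Sigma_1\sqcup\dots\sqcup\Sigma_k\subset M_0\cong S^n$, where $\Sigma_i=\{0\}\times S^{n-2}$ is the belt sphere of the $i$-th handle, carried with its normal framing (which is unique, since $\pi_{n-2}(SO(2))=0$ for $n\ge 4$). A van Kampen computation identifies the complement $S^n\setminus L$ with $M\setminus\bigcup_i\nu(\gamma_i)$, whose fundamental group is $F_k$, freely generated by the meridians of the $\Sigma_i$; in particular each $\Sigma_i$ is a null-homotopic codimension-two sphere in $S^n$ with infinite cyclic complement group. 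For $n\ge 5$, so that $n-2\ge 3$, a Levine-type unknotting argument then forces $L$ to be the trivial link, and surgery on the trivial $k$-component $(n-2)$-link in $S^n$ yields exactly $\#_k(S^{n-1}\times S^1)$. This last step, together with the identification $M_0\cong S^n$, is what breaks in dimension four --- a knotted $2$-sphere in $S^4$ can have infinite cyclic group --- which is why $n=4$ is excluded in general; when $k=1$ one has the single sphere $\Sigma_1\subset S^n$ with complement a homotopy circle, which is topologically unknotted even for $n=4$ (its group $\Z$ being ``good''), or one may appeal to Hamilton's work on four-manifolds with positive isotropic curvature, giving $M\cong S^{n-1}\times S^1$ in that case as well.
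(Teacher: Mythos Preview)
Your proposal rests on a step you explicitly do not prove: that surgery on embedded circles can be performed while keeping positive isotropic curvature. You write that you are ``granting the geometric step below'' and that it is ``presumably the content of the Key Lemma,'' but the paper contains no such lemma and proves no curvature-preserving surgery result whatsoever. A Gromov--Lawson--type neck construction for PIC in codimension $n-1$ is a substantial geometric problem in its own right; without it you cannot assert that $M_0$ carries a PIC metric, so Micallef--Moore does not apply to $M_0$ and your argument stalls there. (Even granting that step, the unknotting portion is sketchy: Levine's criterion asks that the complement be a homotopy circle, not merely that its fundamental group be infinite cyclic, and ``each component unknotted'' does not by itself yield ``the link is trivial.'')

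The paper's route is entirely different and avoids these issues. It applies Micallef--Moore once, to $M$ itself, obtaining $\pi_i(M)=0$ for $2\le i\le n/2$; everything thereafter is pure topology. One computes the homology of the universal cover $\widetilde M$ via Hurewicz and Poincar\'e duality with $\Z[\pi]$ coefficients, builds by hand a map $g\colon Y=\#_k(S^{n-1}\times S^1)\to M$ and shows it is a homotopy equivalence using Whitehead's theorem on the universal covers, and then concludes homeomorphism from the Kreck--L\"uck result that $Y$ is a Borel manifold (for $n\ge 5$; the cases $n=3$ and $k=1$, $n=4$ are handled separately). No geometric surgery and no link-in-sphere analysis are needed.
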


We note that a conjecture of M. Gromov(~\cite{gr} Section 3
(b)) and A. Fraser~\cite{fr}, based on the work of Micallef-Wang  ~\cite{mw},  states  that any compact manifold with
positive isotropic curvature has a finite cover satisfying our
hypothesis.

\begin{conjecture}[M. Gromov-A. Fraser]
$\pi_1(M)$ is virtually free, i.e., it is a finite extension of a free
group.
\end{conjecture}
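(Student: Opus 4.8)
The plan is to recast the conjecture group-theoretically and then attack what remains with minimal-surface techniques. Recall that a finitely generated group is virtually free precisely when it acts cocompactly on a tree with finite vertex stabilizers (Stallings, Karrass--Pietrowski--Solitar); combined with Dunwoody's theorem that finitely presented groups are accessible, this gives the clean criterion that a finitely presented group is virtually free if and only if it contains no one-ended finitely presented subgroup. (For the forward direction, every finitely generated subgroup of a virtually free group is itself finitely generated and virtually free, hence has zero or two ends; for the converse, Dunwoody's accessible decomposition over finite edge groups has finitely presented vertex groups, so if none of them is one-ended they are all finite and the group acts cocompactly on the resulting Bass--Serre tree with finite stabilizers.) Since $M$ is closed, $G:=\pi_1(M)$ is finitely presented, so the conjecture reduces to the assertion: \emph{the fundamental group of a manifold with positive isotropic curvature has no one-ended finitely presented subgroup.}

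To attack this I would use minimal surfaces. Given any $\pi_1$-injective map of a closed surface $\Sigma$ of positive genus into $M$, the existence theory of Sacks--Uhlenbeck and Schoen--Yau produces a branched minimal immersion of least area in the homotopy class; being area-minimizing it is stable, and the second-variation computation of Micallef--Moore~\cite{mm}, as refined by Micallef--Wang~\cite{mw} and Fraser~\cite{fr}, shows that no stable minimal surface of positive genus can exist in a manifold of positive isotropic curvature of dimension $\ge 5$. This recovers Fraser's theorem that $G$ has no $\Z\oplus\Z$ subgroup and, by the same argument, no subgroup isomorphic to the fundamental group of a closed surface of positive genus. The step I would then try to establish is that a one-ended finitely presented subgroup $H\le G$ must, by virtue of its coarse geometry, admit a $\pi_1$-injective map of a closed surface of positive genus into $M$ --- for instance by exhibiting a closed surface subgroup inside $H$, or by extracting an area-minimizing integral $2$-cycle from the single end of $H$ in a compact model. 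Either outcome contradicts the previous sentence, forcing every vertex group in Dunwoody's decomposition to be finite and hence $G$ to be virtually free.

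The hard part will be exactly the implication ``$H$ one-ended $\Rightarrow$ a closed surface of positive genus maps $\pi_1$-injectively to $M$.'' Its purely group-theoretic shadow --- that a one-ended (say hyperbolic) group contains a surface subgroup --- is Gromov's surface-subgroup problem, known for fundamental groups of closed hyperbolic $3$-manifolds by Kahn--Markovi\'c and for cubulated hyperbolic groups, but open in general; so one genuinely needs geometric input specific to positive isotropic curvature in order to manufacture the required minimal object directly from the geometry of $\widetilde{M}$. A complementary route, available only in the dimensions where Ricci flow with surgery has been carried through under positive isotropic curvature --- dimension $4$ (Hamilton, Chen--Zhu) and dimension $\ge 12$ (Brendle) --- is to invoke the resulting diffeomorphism classification of $M$ as a connected sum of spherical space forms and (possibly twisted) copies of $S^{n-1}\times S^1$; then $G$ is a free product of finite groups and copies of $\Z$, which is manifestly virtually free. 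Bridging these two routes --- removing the dimension restriction of the surgery argument while closing the surface-subgroup gap in the minimal-surface argument --- is where the difficulty concentrates.
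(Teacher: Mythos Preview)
The paper does not prove this statement: it is recorded there as an open conjecture of Gromov and Fraser, and the paper's contribution is the conditional Theorem~\ref{posisot}, which assumes $\pi_1(M)$ is already free and then identifies the homeomorphism type of $M$. So there is no proof in the paper to compare your proposal against.

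As for the proposal itself, you are candid that the core step --- ``$H$ one-ended $\Rightarrow$ a closed positive-genus surface maps $\pi_1$-injectively into $M$'' --- is unproven, and you correctly note that its group-theoretic shadow is Gromov's surface-subgroup problem. That is not a technical detail to be filled in later; it is the whole difficulty. The reduction via Dunwoody accessibility to ruling out one-ended finitely presented subgroups is standard and correct, and the Fraser/Fraser--Wolfson input does exclude closed surface subgroups of positive genus, but nothing in your outline bridges the gap between ``one-ended'' and ``contains a surface subgroup'' (or produces the requisite stable minimal surface directly). The Ricci-flow paragraph is likewise a description of what is known in certain dimensions rather than an argument that works in general. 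In short, what you have written is a reasonable survey of possible attack routes on an open conjecture, not a proof; the paper treats the statement the same way and does not claim otherwise.
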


It is known by the work of A. Fraser ~\cite{fr} and A. Fraser and
J. Wolfson ~\cite{fr2} that $\pi_1(M)$ does not contain any
subgroup isomorphic to the fundamental group of a closed surface
of genus at least one.

Our starting point is the following fundamental result of M.
Micallef and J. Moore~\cite{mm}.

\begin{thm}[M. Micallef-J. Moore]
Suppose $M$ is a closed manifold with positive isotropic
curvature. Then $\pi_i(M)=0$ for $2 \le \ i \ \le \frac {n}{2}$.
\end{thm}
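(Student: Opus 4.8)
The plan is to argue by contradiction, combining the existence of a \emph{low-index} minimal (equivalently, conformal harmonic) two-sphere with a curvature-forced \emph{lower} bound on the Morse index of any such sphere. Suppose, contrary to the assertion, that $\pi_j(M)\neq 0$ for some $j$ with $2\le j\le n/2$, and let $j$ be the smallest such integer, so that $\pi_i(M)=0$ for $2\le i< j$. We will contradict this by showing there is no room for $j$ in the stated range.

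First I would establish, via Sacks--Uhlenbeck theory, the existence of a non-constant harmonic map $f\co S^2\to M$ of Morse index at most $j-2$. Since $j$ is the first non-vanishing higher homotopy group, a non-trivial class in $\pi_j(M)\cong\pi_{j-2}(\Omega^2 M)$ is carried by a $(j-2)$-parameter family of maps $S^2\to M$ that cannot be swept below the energy level of a minimal sphere. Running min--max for the perturbed $\alpha$-energies $E_\alpha$ (which, for $\alpha>1$, satisfy condition (C) and have compact critical set) produces $E_\alpha$-critical maps of index $\le j-2$; letting $\alpha\to 1$ and invoking the Sacks--Uhlenbeck compactness and bubbling analysis yields a non-constant harmonic two-sphere. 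The point of minimality of $j$ is precisely that, because $\pi_i(M)=0$ for $2\le i<j$, no energy can leak into bubbling in a way that trivializes the detected class, so the limiting sphere is genuinely non-constant; and the index is not raised in the limit, giving $\mathrm{index}(f)\le j-2$.

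Next I would prove the curvature estimate: any non-constant harmonic two-sphere $f\co S^2\to M$ in a manifold of positive isotropic curvature has $\mathrm{index}(f)\ge\tfrac{n-2}{2}$. Here one uses conformality: a harmonic map from $S^2$ is weakly conformal (its Hopf differential is a holomorphic quadratic differential on $S^2$, hence vanishes), so in a local complex coordinate $z$ the section $\partial f:=df(\partial_z)$ of $f^*TM\otimes\C$ is \emph{isotropic}, $\langle\partial f,\partial f\rangle=0$, and harmonicity says exactly that $\partial f$ is holomorphic for the $\bar\partial$-operator of the pulled-back connection. Complexifying the second variation form and evaluating on a holomorphic section $s$ of $f^*TM\otimes\C$ that is orthogonal to $\partial f$ and itself isotropic, the derivative terms integrate away and one is left with $-\int_{S^2}\langle\mathcal{R}(\partial f,s)\bar s,\overline{\partial f}\rangle$, which is strictly negative by positivity of the curvature on the totally isotropic two-plane $\mathrm{span}_{\C}\{\partial f,s\}$. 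To produce enough such sections, one passes to the rank-$(n-2)$ holomorphic ``isotropic normal bundle'' $N=(\partial f)^\perp/\C\,\partial f$, which carries a non-degenerate complex-bilinear form; the Grothendieck splitting $N\cong\bigoplus\mathcal{O}(a_i)$ (with summands pairing up under the form, and a positive summand forced by the holomorphic section $\partial f$) together with a Riemann--Roch count gives the space of holomorphic isotropic sections of $N$ complex dimension $\ge\tfrac{n-2}{2}$; converting this to a real negative subspace for the index form yields $\mathrm{index}(f)\ge\tfrac{n-2}{2}$.

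Combining, $\tfrac{n-2}{2}\le\mathrm{index}(f)\le j-2$, so $j\ge\tfrac n2+1>\tfrac n2$, contradicting $j\le n/2$; hence no such $j$ exists and $\pi_i(M)=0$ for $2\le i\le n/2$. The main obstacle is the second step: one must show the min--max level over the $(j-2)$-parameter family is realized by a harmonic \emph{sphere} --- not a constant, not a lower-dimensional degeneration --- with the index bound surviving the $\alpha\to1$ limit, and it is here that minimality of $j$ and the Sacks--Uhlenbeck bubbling analysis carry the weight; this is the delicate analytic heart of the argument. By contrast, the third step is essentially linear algebra plus the Grothendieck splitting once the complexified second variation formula is in hand, though some care in the bookkeeping is needed to land the sharp constant $\tfrac{n-2}{2}$.
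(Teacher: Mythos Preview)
The paper does not prove this statement: it is quoted as the theorem of Micallef and Moore~\cite{mm} and used as a black box, so there is no ``paper's own proof'' to compare against. What you have outlined is essentially the original Micallef--Moore argument itself, and the structure is correct: (i) if $j$ is the least integer $\ge 2$ with $\pi_j(M)\neq 0$, a Sacks--Uhlenbeck/min--max argument over a $(j-2)$--parameter family produces a nonconstant conformal harmonic $2$--sphere of Morse index at most $j-2$; (ii) positivity of the isotropic curvature, applied to holomorphic isotropic sections of the complexified normal bundle via the Grothendieck splitting and a Riemann--Roch count, forces a linear-in-$n$ lower bound on the index; (iii) comparing the two bounds pushes $j$ above $n/2$.

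Two small cautions. First, the precise index lower bound in~\cite{mm} is a touch weaker than the $\tfrac{n-2}{2}$ you wrote; the sharp constant depends on careful bookkeeping with the degrees in the Grothendieck splitting and with which sections lift to genuine isotropic sections of $f^*TM\otimes\C$ (rather than of the quotient $N$), so be sure your count actually yields what you claim. Second, as you yourself flag, the genuinely hard part is the passage $\alpha\to 1$: preserving nontriviality of the limiting sphere and the index bound through bubbling is the analytic core of~\cite{mm}, and your sketch correctly locates but does not resolve this difficulty.
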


It is clear that the following purely topological result, together with the Micallef-Moore theorem, implies Theorem~\ref{posisot}

\begin{thm}\label{main}
Let $M$ be a smooth, orientable, closed $n$-manifold such that
$\pi_1(M)$ is a free group on $k$ generators and $\pi_i(M)=0$ for
$2 \le \ i \ \le \frac {n}{2}$. If $n\neq 4$ or $k=1$, then $M$ is
homeomorphic to the connected sum of $k$ copies of $S^{n-1}\times
S^1$.
\end{thm}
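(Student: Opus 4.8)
The plan is to produce a simple homotopy equivalence $M\simeq X:=\#_k(S^{n-1}\times S^1)$ and then promote it to a homeomorphism by high-dimensional surgery; I will assume $n\ge 5$ (the cases $n\le 3$ follow from classical manifold topology together with the Poincar\'e conjecture, and the case $n=4$, $k=1$ is discussed at the end). Write $G=\pi_1(M)\cong F_k$ and let $\M\to M$ be the universal cover. Since a free group has cohomological dimension $1$, the Serre spectral sequence of $\M\to M\to\bigvee_k S^1$ has only two columns and collapses; combined with the Hurewicz theorem (the hypothesis $\pi_i(M)=0$ for $2\le i\le n/2$ makes $\M$ be $\lfloor n/2\rfloor$-connected) this gives $H_1(M)=\Z^k$ and $H_i(M)=0$ for $2\le i\le\lfloor n/2\rfloor$. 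For the top range I would use that $C_*(\M)$ is a finite complex of free $\Z G$-modules which, by Poincar\'e duality for the closed oriented manifold $M$, is chain-homotopy equivalent to its $n$-dual; hence $H_i(\M)\cong\overline{H^{n-i}(M;\Z G)}$. Because $\Z$ has projective dimension $1$ over $\Z G$ and the low homology of $\M$ vanishes, the universal-coefficient spectral sequence gives $H^m(M;\Z G)=0$ for $2\le m\le\lfloor n/2\rfloor$, which fills in $H_i(\M)=0$ for $\lceil n/2\rceil\le i\le n-2$. Thus $\M$ is $(n-2)$-connected and $H_*(M)\cong H_*(X)$.

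Next choose a map $\bigvee_k S^1\hookrightarrow M$ onto a $1$-skeleton inducing an isomorphism on $\pi_1$. The connectivity of $\M$ forces the $(n-1)$-skeleton of $M$ to be homotopy equivalent to $Z:=\bigvee_k S^1\vee\bigvee_k S^{n-1}$ and presents $M\simeq Z\cup_\psi e^n$ for a single class $\psi\in\pi_{n-1}(Z)$. The universal cover of $Z$ is a tree with $(n-1)$-spheres wedged on, so $\pi_{n-1}(Z)\cong(\Z G)^k$, on which the self-homotopy-equivalences of $Z$ act through $\operatorname{Aut}(F_k)\ltimes GL_k(\Z G)$. Self-duality of the $\Z G$-chain complex of $Z\cup_\psi e^n$, whose only non-formal differential is $\psi$, pins $\psi$ down up to this action: it must be the conjugate transpose of the differential $\partial_1=(g_i-1)$ of the standard free resolution $0\to(\Z G)^k\xrightarrow{\partial_1}\Z G\to\Z\to 0$ --- which is exactly the attaching datum of $X$. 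Therefore $M\simeq X$, and since $\operatorname{Wh}(F_k)=0$ the equivalence is simple. (For $k=1$ there is a shortcut: a degree-one map $M\to S^1$ inducing an isomorphism on $\pi_1$ has, after smoothing, a connected fibre which is a homotopy $(n-1)$-sphere, exhibiting $M$ as a mapping torus over $S^1$.)

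Finally, fix a simple homotopy equivalence $f\co M\to X$ and feed it into the topological surgery exact sequence for $X$. Its normal invariant $\nu(f)\in[X,G/\mathrm{TOP}]$ has vanishing surgery obstruction in $L_n(\Z G)$; since $H^*(X)$ is concentrated in degrees $0,1,n-1,n$, since the surgery-obstruction maps $\pi_{4j}(G/\mathrm{TOP})\xrightarrow{\ \sim\ }L_{4j}(\Z)$ and $\pi_{4j+2}(G/\mathrm{TOP})=\Z/2\xrightarrow{\ \sim\ }L_{4j+2}(\Z)$ are isomorphisms, and since $L_*(\Z G)\cong L_*(\Z)\oplus\bigoplus^k L_{*-1}(\Z)$ by Cappell's splitting theorem (there is no $\operatorname{UNil}$ contribution because $F_k$ is torsion-free, so the amalgamations over the trivial subgroup are square-root closed), a direct check shows that this surgery-obstruction map is injective on the relevant summand, whence $\nu(f)=0$. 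Thus $[M,f]$ lies in the image of $L_{n+1}(\Z G)\to\mathcal{S}^{\mathrm{TOP}}(X)$, and the same bookkeeping shows $[X\times I\ \mathrm{rel}\ \partial,\,G/\mathrm{TOP}]\to L_{n+1}(\Z G)$ is onto, so the $L_{n+1}$-action on the base point is trivial and $\mathcal{S}^{\mathrm{TOP}}(X)$ is a single point. Hence $f$ is homotopic to a homeomorphism, proving the theorem for $n\ge 5$; for $n=4$, $k=1$ one runs the same argument in Freedman's topological surgery theory --- legitimate because $\pi_1=\Z$ is a good group, whereas free groups of rank $\ge 2$ are not known to be good, which is precisely why $n=4$, $k\ge 2$ is excluded. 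The main obstacle throughout is this last step: controlling surgery over the infinite non-abelian ring $\Z[F_k]$, i.e.\ identifying $L_*(\Z F_k)$ and verifying the exact-sequence bookkeeping (equivalently, that $X$ is topologically rigid); by comparison the homotopy-theoretic input of the first two paragraphs is formal.
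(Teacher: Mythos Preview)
Your outline is correct and follows the paper's two-step strategy---first establish a homotopy equivalence $M\simeq Y:=\#_k(S^{n-1}\times S^1)$, then invoke topological rigidity of $Y$---but the execution of both steps differs. For the homotopy type, the paper does not try to pin down a minimal cell structure on $M$ or identify the top attaching map $\psi$ via self-duality of the $\Z[\pi]$-chain complex; instead it builds a map $g\colon Y\to M$ directly. The point is that Poincar\'e duality with $\Z[\pi]$-coefficients furnishes a \emph{canonical} module isomorphism $\pi_{n-1}(Y)\cong H^1_c(\widetilde X)\cong\pi_{n-1}(M)$ (with $X=\bigvee_k S^1$), so one simply sends each $(n-1)$-cell of $Y$ to the corresponding element of $\pi_{n-1}(M)$; the single relation $\sum_j(1-\alpha_j)A_j=0$ coming from the $n$-cell of $Y$ is then automatically satisfied in $\pi_{n-1}(M)$, $g$ extends, and its lift to universal covers is a homology equivalence of simply-connected complexes, hence a homotopy equivalence. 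This bypasses handle-trading, the freeness of $\pi_{n-1}(M)$ over $\Z[\pi]$, and the step in your argument that most needs care (that two length-one free resolutions of $H^1(F_k;\Z F_k)$ must be isomorphic, which is what ``self-duality pins $\psi$ down up to the $GL_k(\Z G)$-action'' amounts to). For the rigidity step, the paper just quotes Kreck--L\"uck: $S^{n-1}\times S^1$ is a Borel manifold, and for $n\ge 5$ connected sums of Borel manifolds are Borel. Your direct run through the surgery exact sequence (Shaneson/Cappell splitting of $L_*(\Z F_k)$, vanishing of $\mathrm{UNil}$, and the normal-invariant bookkeeping) is essentially what underlies those theorems, so your route is more self-contained while the paper's is much shorter; for $n=3$ the paper appeals to the Kneser conjecture and Perelman rather than surgery.
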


Henceforth let $M$ be a smooth, orientable, closed $n$-manifold such
that $\pi_1(M)$ is a free group on $k$ generators and $\pi_i(M)=0$ for
$2 \le \ i \ \le \frac {n}{2}$. We assume throughout that all
manifolds we consider are orientable.

Let $\M$ be the universal cover of $M$. Hence $\pi_1(\M)$ is trivial
and so is $\pi_i(\M)=\pi_i(M)$ for $2 \le \ i \ \le \frac {n}{2}$. We shall show that the homology of $\M$ is isomorphic as $\pi_1(M)$-modules to that of the connected sum of $k$ copies of $S^{n-1}\times S^1$. We then show that $M$ is homotopy equivalent to the connected sum of $k$ copies of $S^{n-1}\times S^1$ using Theorems of Whitehead. Finally, recent results of Kreck and L\"uck allow us to conclude the result.

\begin{acknowledgements}
We thank the referees for helpful comments and for suggesting a simplification of our proof.
\end{acknowledgements} 

\section{The homology of $\M$}

Let $X$ denote the wedge $\vee_{j=1}^k S^1$ of $k$ circles and let $x$
denote the common point on the circles. Choose and fix an isomorphism
$\varphi$ from $\pi_1(M,p)$ to $\pi_1(X,x)$ for some basepoint $p\in
M$. We shall use this identification throughout. Denote $\pi_1(M,p)=\pi_1(X,x)$ by $\pi$.

As $X$ is an Eilenberg-Maclane space, there is a map $f:(M,p)\to
(X,z)$ inducing $\varphi$ on fundamental groups and a map $s:(X,z)\to
(M,p)$ so that $f\circ s:X\to X$ is homotopic to the
identity. 

We deduce the homology of $\M$ using the Hurewicz Theorem and Poincar\'e duality. 

\begin{lemma}\label{lowdim}
For $1\leq i\leq n/2$, $H_i(\M,\Z)=0$
\end{lemma}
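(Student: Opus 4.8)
The statement is that for $1 \le i \le n/2$, the homology $H_i(\widetilde M, \Z)$ vanishes. The key point is that $\widetilde M$ is simply connected (being the universal cover) and, by the hypothesis carried over from $M$, has $\pi_i(\widetilde M) = \pi_i(M) = 0$ for $2 \le i \le n/2$. The plan is to feed this into the Hurewicz theorem. Since $\pi_1(\widetilde M) = 0$, the Hurewicz theorem gives $H_1(\widetilde M, \Z) = 0$ immediately. Then, since also $\pi_2(\widetilde M) = 0$, we know $\widetilde M$ is $2$-connected, and the Hurewicz theorem in the form ``the first nonzero homotopy group equals the first nonzero (reduced) homology group'' lets us bootstrap: inductively, if $\widetilde M$ is $(i-1)$-connected and $\pi_i(\widetilde M) = 0$ with $i \le n/2$, then $H_i(\widetilde M, \Z) = 0$ and $\widetilde M$ is $i$-connected, so we may continue.

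Let me write out the induction carefully. I would argue by induction on $i$ that $\widetilde M$ is $i$-connected for all $i$ with $1 \le i \le n/2$, i.e. $\pi_j(\widetilde M) = 0$ for $j \le i$. The base case $i = 1$ holds because $\widetilde M$ is the universal cover. For the inductive step, suppose $\widetilde M$ is $(i-1)$-connected with $2 \le i \le n/2$. By the Hurewicz theorem, $H_j(\widetilde M, \Z) = 0$ for $1 \le j \le i-1$ and $H_i(\widetilde M, \Z) \cong \pi_i(\widetilde M)$. But $\pi_i(\widetilde M) = \pi_i(M) = 0$ since $2 \le i \le n/2$. Hence $H_i(\widetilde M, \Z) = 0$ and $\pi_i(\widetilde M) = 0$, so $\widetilde M$ is $i$-connected. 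Running this up to $i = \lfloor n/2 \rfloor$ yields $H_i(\widetilde M, \Z) = 0$ for all $1 \le i \le n/2$, which is exactly the assertion of the lemma.

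There is essentially no obstacle here — this is a direct application of the Hurewicz theorem, and the only thing to be slightly careful about is the interplay between the integer bound $i \le n/2$ and the fact that $n/2$ need not be an integer (so one really runs the induction up to $\lfloor n/2 \rfloor$, and for odd $n$ there is no homology claim at the half-integer $n/2$ anyway). One should also note at the outset that $\pi_i(\widetilde M) \cong \pi_i(M)$ for $i \ge 2$ because a covering map induces isomorphisms on all higher homotopy groups; this is already remarked in the text preceding the lemma. I expect the genuinely substantive work to come later — in using Poincaré duality to pin down the homology in degrees above $n/2$, and in assembling the homotopy equivalence — rather than in this preliminary lemma.
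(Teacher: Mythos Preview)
Your argument is correct and is exactly the paper's approach: the paper's proof is the one-line observation that $\M$ is simply-connected with $\pi_i(\M)=\pi_i(M)=0$ for $1<i\le n/2$, so Hurewicz gives $H_i(\M,\Z)=0$ in this range. Your write-up just unpacks the inductive use of Hurewicz more explicitly than the paper does.
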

\begin{proof}
As $\M$ is simply-connected and $\pi_i(\M)=\pi_i(M)=0$ for $1<i\leq n/2$ (by hypothesis), by Hurewicz theorem $H_i(\M,\Z)=0$ for $1\leq i\leq n/2$.
\end{proof}

We deduce the homology in dimensions above $n/2$ using Poincar\'e duality for $M$ with coefficients in the module $\Z[\pi]$, namely
$$H_{n-i}(M,\Z[\pi])=H^i(M,\Z[\pi])$$

Recall that $H_k(M,\Z[\pi])=H_k(\M,\Z)$ and the group $H^i(M,\Z[\pi])$ is the cohomology with compact support $H^i_c(\M,\Z)$. Hence Poincar\'e duality with coefficients in $\Z[\pi]$ is the same as Poincar\'e duality for a non-compact manifold relating homology to cohomology with compact support.

To apply Poincar\'e duality, we need the following lemma.

\begin{lemma}\label{isom}
For $1\leq i\leq n/2$, the map $s:(X,z)\to (M,p)$ induces isomorphisms of modules with $s_*:H^i(M;\Z[\pi])\to H^i(X;\Z[\pi])$.
\end{lemma}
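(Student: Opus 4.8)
The plan is to use the retraction $f\circ s\simeq\mathrm{id}_X$ to reduce the statement to the vanishing of a relative cohomology group, and then to compute that group by passing to universal covers, where $\widetilde X$ is a tree and $\M$ is highly connected.

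Since $f\circ s$ is homotopic to $\mathrm{id}_X$, on cohomology with coefficients in the local system $\Z[\pi]$ one has $s^*\circ f^*=\mathrm{id}$ on $H^i(X;\Z[\pi])$ for every $i$. Thus $s^*$ is a split epimorphism of $\Z[\pi]$-modules in each degree, with $f^*$ a splitting, and it remains only to prove that $s^*$ is injective for $1\le i\le n/2$. Replacing $s$ by the inclusion $X\incl M_s$ of $X$ into the mapping cylinder of $s$, we may regard $s^*$ as the restriction map of the CW pair $(M_s,X)$, where $M_s\simeq M$. As $s^*$ is onto, the cohomology long exact sequence of $(M_s,X)$ with coefficients in $\Z[\pi]$ breaks into short exact sequences
$$0\to H^i(M_s,X;\Z[\pi])\to H^i(M;\Z[\pi])\xrightarrow{s^*}H^i(X;\Z[\pi])\to 0,$$
so it suffices to prove $H^i(M_s,X;\Z[\pi])=0$ for $0\le i\le n/2$.

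For this, write $D_*=C_*(\widetilde{M_s},\widetilde X)$ for the relative cellular chain complex of the universal cover. Since $M$ and $X$ are finite complexes, $D_*$ is a bounded complex of finitely generated free $\Z[\pi]$-modules, its cohomology after applying $\mathrm{Hom}_{\Z[\pi]}(-,\Z[\pi])$ is $H^*(M_s,X;\Z[\pi])$, and $H_*(D_*)$ is the ordinary relative homology $H_*(\widetilde{M_s},\widetilde X)$. Now $\widetilde X$ is the universal cover of a wedge of circles, hence a tree, hence contractible, while $\M$ is simply connected with $H_i(\M;\Z)=0$ for $1\le i\le n/2$ by Lemma~\ref{lowdim}. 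The long exact homology sequence of the pair, together with the fact that $\widetilde X\to\widetilde{M_s}$ induces an isomorphism on $H_0$, gives $H_i(\widetilde{M_s},\widetilde X;\Z)=0$ for $0\le i\le n/2$.

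It remains to pass from this vanishing of homology to vanishing of cohomology, and this is the only genuinely delicate step: over the noncommutative ring $\Z[\pi]$ there is no naive universal coefficient theorem, so one must exploit that $D_*$ is a bounded complex of finitely generated free modules — which is exactly where the finiteness of $M$ and $X$ is used. One clean way is the universal-coefficient spectral sequence
$$E_2^{p,q}=\mathrm{Ext}^p_{\Z[\pi]}(H_q(D_*),\Z[\pi])\ \Longrightarrow\ H^{p+q}(\mathrm{Hom}_{\Z[\pi]}(D_*,\Z[\pi])),$$
every term of which in total degree $\le n/2$ vanishes, since either $q\le n/2$, whence $H_q(D_*)=0$, or $q>n/2\ge p+q$, whence $p<0$. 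Equivalently one truncates $D_*$: with $m=\lfloor n/2\rfloor$ the submodule $Z=\mathrm{im}(D_{m+1}\to D_m)$ sits in an exact sequence $0\to Z\to D_m\to\cdots\to D_0\to 0$ with $D_0,\dots,D_m$ free, hence $Z$ is projective, and replacing $D_m$ by $Z$ gives a chain-homotopy-equivalent bounded complex of projectives concentrated in degrees $\ge m$ and surjecting onto $Z$ in degree $m$, whose dual therefore has no cohomology in degrees $\le m$. Either way $H^i(M_s,X;\Z[\pi])=0$ for $i\le n/2$, proving the lemma. (Alternatively, these last two steps can be replaced by one application of the Serre spectral sequence with local coefficients for the fibration $\M\to M\xrightarrow{f}X$, whose $E_2$-page $H^p(X;\underline{H^q(\M)}\otimes\Z[\pi])$ vanishes for $1\le q\le n/2$, so that $f^*$ — and hence $s^*=(f^*)^{-1}$ — is an isomorphism in those degrees.)
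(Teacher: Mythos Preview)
Your argument is correct and takes a genuinely different route from the paper's. The paper argues more directly: since $\pi_i(M)=0$ for $2\le i\le n/2$, one may attach cells of dimension $\ge n/2+2$ to $M$ to build an Eilenberg--MacLane space $\bar M$ for $\pi$, and then the composite $X\xrightarrow{s}M\hookrightarrow\bar M$ is a map of $K(\pi,1)$'s inducing an isomorphism on $\pi_1$, hence a homotopy equivalence; since the added cells lie above dimension $n/2+1$, the inclusion $M\hookrightarrow\bar M$ already induces an isomorphism on $H^i(-;A)$ for every coefficient module $A$ and every $i\le n/2$, and therefore so does $s$. This bypasses the universal-coefficient spectral sequence (and your projective-truncation alternative) entirely and yields the result for arbitrary coefficient modules at once. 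Your approach, by contrast, never leaves the category of finite complexes and isolates the content of the lemma as the vanishing of a single relative group; the retraction $f\circ s\simeq\mathrm{id}_X$ does real work for you by giving surjectivity of $s^*$ for free, whereas the paper's proof does not invoke $f$ at this stage at all.
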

\begin{proof}
As the map $s$ induces an isomorphism on homotopy groups in dimensions at most $n/2$, it induces isomorphisms on the cohomology groups with twisted coefficients. Specifically, we can add cells of dimensions $k\geq n/2+2$ to $M$ to obtain an Eilenberg-MacLane space $\bar{M}$ for the group $\pi$, which is thus homotopy equivalent to $X$. For $i\leq n/2$ and any $\Z[\pi]$-module $A$, it follows that 
$$H_i(M,A)=H_i(\bar{M},A)=H_i(X,A)$$
where the first equality follows as the cells added to $M$ to obtain $\bar{M}$ are of dimension at least $n/2+2$ and the second as the spaces are homotopy equivalent. 
\end{proof}
 
By applying Poincar\'e duality, we obtain the following result.

\begin{lemma}\label{homol}
Let $M$ be a smooth, orientable, closed $n$-manifold such that
$\pi_1(M)$ is a free group on $k$ generators and $\pi_i(M)=0$ for
$2 \le \ i \ \le \frac {n}{2}$. Then, for the universal cover $\M$ of $M$,
\begin{enumerate}
\item $H_i(\M,\Z)=0$ for $1\leq i<n-1$ 
\item We have an isomorphism $H_{n-1}(\M,\Z)=H^1_c(\widetilde{X},\Z)$, where $\widetilde{X}$ is the universal cover of $X$, determined by the isomorphisms $s_*:\pi_1(X,z)\to\pi_1(M,p)$ on fundamental groups. 
\end{enumerate}
\end{lemma}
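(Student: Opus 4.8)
The plan is to read off the homology of $\M$ from Lemma~\ref{lowdim}, Lemma~\ref{isom}, and Poincar\'e duality for $M$ with coefficients in $\Z[\pi]$, using throughout the identifications $H_i(M,\Z[\pi])=H_i(\M,\Z)$ and $H^i(M,\Z[\pi])=H^i_c(\M,\Z)$ recalled in the excerpt, together with the analogous identification $H^i(X,\Z[\pi])=H^i_c(\widetilde X,\Z)$ for $X$ and its universal cover $\widetilde X$. In the range $1\leq i\leq n/2$ the vanishing $H_i(\M,\Z)=0$ is exactly Lemma~\ref{lowdim}. (At the top, if $\pi\neq 1$ then $\M$ is a noncompact $n$-manifold, so $H_n(\M,\Z)=0$; this is not needed for the statement but records that the homology of $\M$ will be concentrated in degrees $0$ and $n-1$.)

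For $n/2<i\leq n-2$ I would apply Poincar\'e duality to obtain
$$H_i(\M,\Z)=H_i(M,\Z[\pi])\cong H^{n-i}(M,\Z[\pi]).$$
Here $i>n/2$ forces $n-i<n/2$, and $i\leq n-2$ forces $n-i\geq 2$; in particular $1\leq n-i\leq n/2$, so Lemma~\ref{isom} applies and gives $H^{n-i}(M,\Z[\pi])\cong H^{n-i}(X,\Z[\pi])=H^{n-i}_c(\widetilde X,\Z)$. Since $\widetilde X$ is the universal cover of a wedge of circles, it is a tree, hence a $1$-dimensional CW complex, and therefore $H^{j}_c(\widetilde X,\Z)=0$ for every $j\geq 2$; as $n-i\geq 2$ this kills $H_i(\M,\Z)$. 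Together with Lemma~\ref{lowdim}, this establishes part (1) for all $i$ with $1\leq i<n-1$.

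For part (2), apply the same duality with $i=1$, which is legitimate since $1\leq n/2$:
$$H_{n-1}(\M,\Z)=H_{n-1}(M,\Z[\pi])\cong H^1(M,\Z[\pi])\cong H^1(X,\Z[\pi])=H^1_c(\widetilde X,\Z),$$
the third isomorphism being the one induced by $s$ from Lemma~\ref{isom}. Since a based map from a wedge of circles into $M$ is determined up to homotopy by the induced map on $\pi_1$, the composite isomorphism depends only on $s_*\co\pi_1(X,z)\to\pi_1(M,p)$, as asserted in the statement. I do not expect a genuine obstacle in any of this: the substance has been carried by Lemmas~\ref{lowdim} and~\ref{isom}, and what remains is the index bookkeeping ensuring that the two lemmas together cover the whole range $1\leq i<n-1$, the routine identification of twisted cohomology of $X$ with the compactly supported cohomology of the $1$-dimensional complex $\widetilde X$, and the care needed to invoke Poincar\'e duality in its local-coefficient (equivalently, noncompact-manifold-with-compact-supports) form, which the preceding paragraphs of the excerpt have already set up.
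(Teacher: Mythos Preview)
Your argument is correct and is precisely the approach the paper intends: the paper's own proof is the single sentence ``The statements follow from Lemmas~\ref{lowdim} and~\ref{isom} by using $H_*(\M,\Z)=H_*(M,\Z[\pi])$,'' and you have simply spelled out the index bookkeeping and the one fact the paper leaves implicit, namely that $\widetilde X$ is a tree so $H^j_c(\widetilde X,\Z)=0$ for $j\geq 2$.
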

\begin{proof}
The statements follow from Lemmas~\ref{lowdim} and~\ref{isom} by using $H_*(\M,\Z)=H_*(M,\Z[\pi])$.
\end{proof}

\section{Homotopy type}

We now show that $M$ is homotopy equivalent to the connected sum $Y$
of $k$ copies of $S^{n-1}\times S^1$. Our first step is to construct a
map $g:Y\to M$. We shall then show that it is a homotopy equivalence.

Note that $Y$ has the structure of a CW-complex obtained as
follows. The $1$-skeleton of $Y$ is the wedge $X$ of $k$ circles. Let
$\alpha_i$ denote the $i$th circle with a fixed orientation.

We attach $k$ $(n-1)$-cells $D_j$, with the $j$th attaching map mapping
$\del D^{n-1}$ to the midpoint $x_j$ of the $j$th circle. Finally, we
attach a single $n$-cell $\Delta$.

We associate to $D_j$ an element $A_j\in\pi_{n-1}(Y,x)$. Namely, as
the attaching map is constant, the $j$th $(n-1)$-cell gives an element
$B_j\in\pi_{n-1}(Y,x_j)$. We consider the subarc $\beta_j$ of $\alpha_j$
joining $z_j$ to $x$ in the negative direction and let $A_j$ be
obtained from $B_j$ by the change of basepoint isomorphism using
$\beta_j$.

Note that if we instead chose the arc joining $z_j$ to $x$ in the
positive direction, then the resulting element is $-\alpha_j\cdot
A_j$. By the construction of $Y$, it follows that the attaching map
of the $(n-1)$-cell represents the element
$$\del\Delta=\Sigma_j (A_j-\alpha_j\cdot A_j)$$
in $\pi_{n-1}(Y)$ regarded as a module over $\pi_1(Y)$. This can be
seen for instance by using Poincar\'e duality.

We now construct the map $g:Y\to M$. Recall that we have a map
$s:(X,z)\to (M,p)$ inducing the isomorphism $\varphi^{-1}$ on
fundamental groups. We define $g$ on the $1$-skeleton $X$ of $Y$ by
$g\vert_X=s$. We henceforth identify the fundamental groups of $Y$ and
$M$ using the isomorphism $\varphi$, i.e., $\pi_1(Y,z)$ is identified with $\pi$.

We next extend $g$ to the $n$-cell of $Y$ as follows. By Hurewicz theorem and Lemma~\ref{homol}, we have isomorphisms
of $\pi$-modules $\pi_{n-1}(M,p)=H_{n-1}(\M,\Z)$ and $\pi_{n-1}(Y,z)=H_{n-1}(\M,\Z)$. By Lemma~\ref{homol}, each of these modules is isomorphic to $H^1_c(\widetilde{X},\Z)$ with the isomorphisms determined by the identifications of the fundamental groups.

Under the above isomorphisms the elements $A_j$ correspond to
elements $A_j'$ in $\pi_{n-1}(M,p)$.  Consider the element $B_j'$ of
$\pi_{n-1}(M,g(z_j))$ obtained from $A_j'$ by the basechange map using
the arc $f(\beta_j)$. We define the map $g$ on $D_j$ extending the
constant map on its boundary to be a representative of $B_j$.

As the $\pi$-modules $\pi_{n-1}(M,p)$ and $\pi_{n-1}(Y,z)$ are
isomorphic, the image $g$ of $\del\Delta$ is homotopically
trivial. Hence we can extend the map $g$ across the cell $\Delta$.

\begin{lemma}
The map $g:Y\to M$ is a homotopy equivalence.
\end{lemma}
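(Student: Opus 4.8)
The plan is to prove that $g$ is a homotopy equivalence by passing to universal covers and invoking Whitehead's theorem. First I would observe that $g$ induces an isomorphism on fundamental groups: on the $1$-skeleton $g$ coincides with $s$, which realizes $\varphi^{-1}$, and the remaining cells of $Y$ do not change $\pi_1$ — the $(n-1)$-cells are attached by constant maps and the single $n$-cell is attached in dimension $n\geq 4$. Let $\widetilde{g}\colon\widetilde{Y}\to\M$ be the induced map of universal covers. Since $\widetilde{Y}$ is a CW complex and $\M$, $Y$, $M$ all have the homotopy type of CW complexes, and since a $\pi_1$-isomorphism whose lift to universal covers is a weak homotopy equivalence is itself a homotopy equivalence, it suffices to show that $\widetilde{g}$ induces an isomorphism on integral homology in every degree (then, as $\widetilde{Y}$ and $\M$ will be simply connected, $\widetilde{g}$ is a weak equivalence). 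We assume $k\geq 1$; the case $k=0$ is the simply-connected case of~\cite{mm}.

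Next I would compute the homology of both covers. For $\M$, Lemma~\ref{homol} gives $H_i(\M,\Z)=0$ for $1\leq i<n-1$ and $H_{n-1}(\M,\Z)\cong H^1_c(\widetilde{X},\Z)$; moreover $H_0(\M,\Z)=\Z$, and since $\pi=F_k$ is infinite, $\M$ is a connected non-compact $n$-manifold without boundary and hence $H_i(\M,\Z)=0$ for $i\geq n$. For $\widetilde{Y}$ I would read off the cellular chain complex of $\Z[\pi]$-modules from the given CW structure: in dimensions $0$ and $1$ it is the chain complex of the tree $\widetilde{X}$, there are no cells in dimensions $2$ through $n-2$ (here $n\geq 4$), there is a free module on $k$ generators in dimension $n-1$, and a free module on one generator in dimension $n$, with top differential sending the generator to $\Sigma_j(D_j-\alpha_j\cdot D_j)$. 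As $\widetilde{X}$ is contractible this gives $H_0(\widetilde{Y},\Z)=\Z$ and $H_i(\widetilde{Y},\Z)=0$ for $1\leq i\leq n-2$, and as $\Z[\pi]=\Z[F_k]$ is an integral domain the top differential is injective, so $H_i(\widetilde{Y},\Z)=0$ for $i\geq n$. In particular $\widetilde{Y}$ and $\M$ are both $(n-2)$-connected and $\widetilde{g}_*$ is automatically an isomorphism in every degree $\neq n-1$.

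The remaining case is degree $n-1$. By the Hurewicz theorem the map $\widetilde{g}_*\colon H_{n-1}(\widetilde{Y},\Z)\to H_{n-1}(\M,\Z)$ is naturally identified with $g_*\colon\pi_{n-1}(Y)\to\pi_{n-1}(M)$. As a $\Z[\pi]$-module, $\pi_{n-1}(Y)$ is generated by the classes $A_j$ with the single relation $\del\Delta=\Sigma_j(A_j-\alpha_j\cdot A_j)=0$. By the construction of $g$, the restriction $g\vert_{D_j}$ was chosen so that, after undoing the basepoint change along the image of $\beta_j$, its class is $A_j'$; hence $g_*(A_j)=A_j'$ for each $j$. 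On the other hand, the fixed isomorphisms $\pi_{n-1}(Y)\cong H^1_c(\widetilde{X},\Z)\cong\pi_{n-1}(M)$ send $A_j$ to $A_j'$ by construction, and being module isomorphisms they carry the relation among the $A_j$ to the corresponding relation among the $A_j'$. Thus $g_*$ and this isomorphism agree on a generating set and so coincide; in particular $g_*$, and hence $\widetilde{g}_*$ in degree $n-1$, is an isomorphism.

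Combining the last two paragraphs, $\widetilde{g}$ induces an isomorphism on integral homology in all degrees, so by the Hurewicz and Whitehead theorems it is a homotopy equivalence; therefore $g$ is a homotopy equivalence as well. I expect the step needing the most care to be the degree-$(n-1)$ identification: one must carry the basepoint-change isomorphisms through the cell-by-cell definition of $g$ and confirm that the relation carried by the $n$-cell $\Delta$ is genuinely respected, so that $g_*$ is exactly the prescribed module isomorphism and not merely an epimorphism onto a generating set.
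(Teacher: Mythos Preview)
Your proof is correct and follows essentially the same strategy as the paper: lift $g$ to the universal covers, show the lift induces isomorphisms on integral homology, and apply Whitehead's theorem to conclude. The only minor difference is that the paper obtains the homology of $\widetilde{Y}$ by applying Lemma~\ref{homol} to $Y$ itself (which satisfies the hypotheses of that lemma) rather than by your direct cellular computation, and is correspondingly terser about why the induced map in degree $n-1$ is an isomorphism.
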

\begin{proof}
Let $G:\widetilde{Y}\to \M$ be the induced map on the universal covers. By Lemma~\ref{homol} applied to $M$ and $Y$, we see that
$H_p(\widetilde{Y})=H_p(\M)=0$ for $0<p\neq n-1$ and  $G$ induces an isomorphism on
$H_{n-1}$. Thus the map $G$ is a homology equivalence. By a theorem of
Whitehead~\cite{Wh}, a homology equivalence between simply-connected
CW-complexes is a homotopy equivalence.

It follows that $G$ induces isomorphisms $G_*:\pi_k(\widetilde{Y})\to
\pi_k(\M)$ for $k>1$. As covering maps induce isomorphims on higher
homotopy groups, and $g$ induces an isomorphism on $\pi_1$, it follows
that $g$ is a weak homotopy equivalence, hence a homotopy
equivalence(see~\cite{Ha}).
\end{proof}

\section{Proof of Theorem~\ref{main}}

The rest of the proof of Theorem~\ref{main} is based on results of
Kreck-L\"uck~\cite{KL}. In~\cite{KL}, the authors define a
manifold $N$ to be a \emph{Borel manifold} if any manifold
homotopy equivalent to $N$ is homeomorphic to $N$. We have shown
that a manifold $M$ satisfying the hypothesis of
Theorem~\ref{main} is homotopy equivalent to the connected sum
$Y$ of $k$ copies of $S^{n-1}\times S^1$. Hence it suffices to
observe that $Y$ is Borel.

By Theorem~0.13(b) of~\cite{KL}, the manifold $S^{n-1}\times S^1$
is Borel for $n \ge 4$. This completes the proof in the case when
$\pi_1(M)=\Z$. Further, if $n\geq 5$, then Theorem~0.9
of~\cite{KL} says that the connected sum of Borel manifolds is
Borel, hence $Y$ is Borel. This concludes the proof for
$\pi_1(M)$ a free group and $n  \geq 5$.\qed

Finally, in the case when $n=3$ by the Knesser conjecture (proved by Stallings) the manifold $M$ is a connected sum of manifolds whose fundamental group is $\Z$. As $M$ is orientable, it follows that if $M$ is expressed as a connected sum of prime manifolds (such a decomposition exists and is unique by the Knesser-Milnor theorem), then each prime component is either $S^2\times S^1$ or a homotopy sphere. By the Poincar\'e conjecture (Perelman's theorem), every homotopy $3$-sphere is homeomorphic to a sphere. It follows that $M$ is the connected sum of $k$ copies of $S^2\times S^1$.\qed

\end{document}